\newtheorem{theorem}{Theorem}[section]
\newtheorem{lemma}[theorem]{Lemma}
\newtheorem{corollary}[theorem]{Corollary}
\theoremstyle{definition}
\newtheorem{definition}[theorem]{Definition}
\theoremstyle{remark}
\newtheorem{remark}[theorem]{Remark}
\numberwithin{equation}{section}
\begin{document}
\setcounter{page}{1}

\title[Elliptic curves and continued fractions]{Elliptic curves and continued fractions}

\author[Nikolaev]
{Igor ~V. ~Nikolaev$^1$}

\address{$^{1}$ Department of Mathematics and Computer Science, St.~John's University, 8000 Utopia Parkway,  
New York,  NY 11439, United States.}
\email{\textcolor[rgb]{0.00,0.00,0.84}{igor.v.nikolaev@gmail.com}}



\subjclass[2010]{Primary 11G05; Secondary 46L85.}

\keywords{elliptic curves, non-commutative tori.}


\begin{abstract}
It is proved that  the  rank of an elliptic curve  is one less 
the  arithmetic complexity of the corresponding non-commutative
torus.   As an illustration, we consider a family  of elliptic curves with complex multiplication.  
\end{abstract} 

\maketitle

\section{Introduction}
Let $\mathcal{E}(K)$ be an  elliptic curve over  the number field $K$.
The Mordell Theorem  says that $\mathcal{E}(K)$ is  a finitely generated abelian group, 
see e.g.  [Silverman \& Tate  1992]  \cite[Chapter 1]{ST}. 
 Little is known about the rank  $rk~\mathcal{E}(K)$ of $\mathcal{E}(K)$ in general,  except for the 
 Birch and Swinnerton-Dyer Conjecture  comparing the rank 
 with the order of zero at the point $s=1$  of the Hasse-Weil $L$-function $L(\mathcal{E},s)$  
 of $\mathcal{E}(K)$.

\smallskip
The non-commutative torus  $\mathcal{A}_{\theta}$  is a 
$C^*$-algebra generated by the unitary operators $u$ and $v$ satisfying 
the commutation relation $vu=e^{2\pi i\theta} uv$ for  a real constant $\theta$
 [Rieffel 1990]  \cite{Rie1}.   The algebra $\mathcal{A}_{\theta}$ is said to have  real multiplication
   if $\theta$ is a quadratic irrationality.   We shall denote
such an algebra  by $\mathcal{A}_{RM}$.  
There exists a covariant functor $F$ from elliptic curves to non-commutative tori, such that 
$F(\mathcal{E}(K))=\mathcal{A}_{RM}$  \cite[Section 1.3]{N}.   Functor  $F$ maps elliptic
curves $\mathcal{E}(K)$ which are isomorphic over $K$ (over the algebraic closure of $K$, resp.)
to isomorphic (Morita equivalent, resp.) non-commutative tori  $\mathcal{A}_{RM}$, 
see also remark \ref{rmk3.2}.

\smallskip
The aim of our note is a formula for $rk~\mathcal{E}(K)$  in terms of
the invariants of 
 the  algebra $\mathcal{A}_{RM}$. Recall that 
each quadratic irrationality can be written in the form 
$\theta_d={a+b\sqrt{d}\over c}$, where $a,b,c\in \mathbf{Z}, ~bc\ne 0$ and $d>0$ is a square-free
integer.   The  regular continued fraction of $\theta_d$  is  known to be eventually periodic,
i.e. $\theta_d=[g_1,\dots, g_m, ~\overline{k_1,\dots, k_{n-m}}]$,  where 
$(k_1,\dots, k_{n-m})$ is  the minimal period and $n\ge m+1\ge 1$.   
Consider a family $\theta_x$ of the irrational numbers of the form
\begin{equation}\label{eq1.1}
\theta_x:=\left\{ {a+b\sqrt{x}\over c} ~| ~ a,b,c=Const\right\},
\end{equation}
where $x$ runs through a  set $U_d$ of the square-free positive integers containing $d$. 
The family of  tori $\{\mathcal{A}_{\theta_x} ~|~x\in U_d\}$ is  a non-commutative analog 
of a rational  elliptic surface  [Silverman 1994]  \cite[Chapter III]{S}, see also remark \ref{rmk3.3.1}.

By the Euler equations we understand a system of polynomial relations in the ring 
 $\mathbf{Z}[g_1,\dots, g_m; ~k_1,\dots,k_{n-m}]$ which solve   the equation
 \begin{equation}\label{eq1.1.1}
 \theta_x=[g_1(x),\dots, g_m(x),  ~\overline{k_1(x),\dots, k_{n-m}(x)}]
 \end{equation}
 for some  polynomials $g_i(x)$ and  $k_j(x)$  in $\mathbf{Z}[x]$. 
For a quick example of such equations see  [Perron 1954]  \cite[p. 88]{P}
and    [Brock, Elkies \& Jordan 2019] \cite{BrElJo1} for a modern treatment. 
The case  $a=0$ and $b=c=1$ was first  studied by  [Euler 1765] \cite{Eul1},  hence the name.
By the   Euler variety    $\mathscr{V}_E$  we understand  the projective closure
of  an irreducible affine variety   defined by the Euler
equations. 
The   arithmetic complexity  $c(\mathcal{A}_{RM})$ 
 is defined as  the  Krull dimension of  variety  $\mathscr{V}_E$.  
 Roughly speaking, such a  number  counts  algebraically  independent 
entries   $g_i(x)$ and  $k_j(x)$ of the continued fraction  (\ref{eq1.1.1}).   In particular,  $1\le c(\mathcal{A}_{RM})\le n$.
Our main result can be formulated as follows.

\medskip
\begin{theorem}\label{thm1.2}
\quad  $rk~\mathcal{E}(K)=c(\mathcal{A}_{RM})-1$,  where  $\mathcal{A}_{RM}=F(\mathcal{E}(K))$.  
\end{theorem}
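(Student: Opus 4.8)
The plan is to use the functor $F$ to transport the arithmetic of $\mathcal{E}(K)$ onto the combinatorial data carried by the continued fraction of $\theta_d$, and then to identify the Mordell-Weil rank with the Krull dimension of the Euler variety up to the additive constant $1$. Fix, for the given curve, the quadratic irrationality $\theta_d=(a+b\sqrt d)/c$ so that $F(\mathcal{E}(K))=\mathcal{A}_{RM}=\mathcal{A}_{\theta_d}$, and write $\mathcal{E}(K)\cong\mathbf{Z}^{\,r}\oplus T$ with $r=rk~\mathcal{E}(K)$ and $T$ the torsion subgroup (Mordell-N\'eron). Under $F$ the relevant invariant of $\mathcal{A}_{\theta_d}$ is its pseudo-lattice $\Lambda_\theta=\mathbf{Z}+\mathbf{Z}\theta_d\subset\mathbf{R}$ together with the real-multiplication order $\mathcal{O}\subset\mathbf{Q}(\sqrt d)$ acting on it. The first step is to re-express $r$ intrinsically on the non-commutative side: a $K$-rational point should correspond, via $F$, to an endomorphism (a rescaling) of $\Lambda_\theta$ compatible with $\mathcal{O}$, and $\mathbf{Z}$-independent points to $\mathbf{Z}$-independent such endomorphisms, so that $r$ equals the free rank of this endomorphism module.

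Next I would interpret the family $\theta_x$ of (\ref{eq1.1}) as the non-commutative counterpart of a family of deformations of $\mathcal{E}(K)$ over the set $U_d$, and read the continued-fraction entries $g_i(x),k_i(x)$ as the coordinate functions on $\mathscr{V}_E$. The heart of the matter is to prove that a maximal algebraically independent subset of the entries $\{g_i,k_i\}$ has exactly $r+1$ elements. Concretely, one writes down the Euler equations, that is the polynomial constraints among the $g_i,k_i$ forced by periodicity of the expansion and computed as in [Perron 1954] \cite[p.88]{P}, and checks that, modulo these relations, all but $r+1$ of the entries are determined by the remaining ones. This gives $\dim\mathscr{V}_E=r+1$, i.e. $c(\mathcal{A}_{RM})=r+1$, which is the assertion of the theorem. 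The additive $1$ is accounted for by a single base coordinate, namely the ambient parameter $x$ (equivalently the overall scaling of the pseudo-lattice), which is always present and does not come from a rational point; this matches the bound $1\le c(\mathcal{A}_{RM})$ of Remark \ref{rmk1.1}, corresponding to $r\ge 0$.

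The main obstacle is the middle step: establishing the precise dictionary between $K$-rational points of $\mathcal{E}(K)$ and algebraically independent entries of the continued fraction of $\theta_d$. The tension is that $r$ is an arithmetic, Galois-theoretic invariant while $\dim\mathscr{V}_E$ is a purely geometric one, so the bridge must come from the functoriality of $F$. I expect this to rest on the isomorphism and Morita-equivalence properties recorded in Remark \ref{rmk3.2}: they pin the ambiguity in $\theta_d$ down to the $GL_2(\mathbf{Z})$-action on the pseudo-lattice, which is exactly what makes the count of algebraically independent entries well-defined and invariant. Once the dictionary is secured, tracking it through the family $\theta_x$ yields the equality $c(\mathcal{A}_{RM})=r+1$ and hence the theorem.
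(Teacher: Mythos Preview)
Your outline correctly isolates where the difficulty lies, but the key bridge you propose does not hold up. You want $K$-rational points of $\mathcal{E}(K)$ to correspond to $\mathcal{O}$-compatible endomorphisms (rescalings) of the rank-two pseudo-lattice $\Lambda_\theta=\mathbf{Z}+\mathbf{Z}\theta_d$, with independent points giving independent endomorphisms. But the endomorphism ring of $\Lambda_\theta$ is precisely the real-multiplication order $\mathcal{O}\subset\mathbf{Q}(\sqrt d)$, a $\mathbf{Z}$-module of rank~$2$ regardless of $r$; there is no room in it for $r$ independent elements once $r\ge 2$. So the dictionary ``rational point $\leftrightarrow$ endomorphism of $\Lambda_\theta$'' cannot be the right one, and without it the count of algebraically independent continued-fraction entries has nothing to anchor it to $r$. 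You flag this step as the main obstacle, but the proposal offers no mechanism to overcome it; as written it is a strategy with the essential lemma missing.

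The paper's route is structurally different and avoids this trap. Rather than squeezing the Mordell--Weil group into the rank-two lattice of $\mathcal{A}_{RM}$, it builds a separate object, the \emph{Mordell $AF$-algebra} $\mathbb{A}_{\mathcal{E}(K)}$, whose $K_0$-group is by construction a dimension group of rank $r+1$ (coming from the embedding $\mathcal{E}(K)\hookrightarrow S^1$ and taking arguments). On the geometric side it shows that $\mathscr{V}_E$ is a flat family over $\mathbf{C}P^1$ with fibre an abelian variety $\mathscr{A}_E$, so that $\dim\mathscr{V}_E=1+\dim\mathscr{A}_E$; your ``$+1$ for the base parameter $x$'' is exactly this $\mathbf{C}P^1$. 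The link $r=\dim\mathscr{A}_E$ is then made by identifying $\mathbb{A}_{\mathcal{E}(K)}$ as a non-commutative coordinate ring of $\mathscr{A}_E$ and comparing $K_0$-ranks (via the $2n$-dimensional torus $\mathcal{A}_{RM}^{2n}$). In short, the rank is read off not from endomorphisms of $\Lambda_\theta$ but from the $K_0$-group of an auxiliary $AF$-algebra tailored to have the right rank; that is the missing idea in your sketch.
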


\medskip\noindent
The article is organized as follows.  We introduce notation and review the known facts in Section 2.
Theorem \ref{thm1.2} is proved in Section 3.  An  illustration of  theorem \ref{thm1.2} is given 
in Section 4.

\section{Preliminaries}
In this section we briefly  review   $C^*$-algebras  and introduce the Mordell
$AF$-algebras.  For a general account  of   $C^*$-algebras we refer the reader to 
[Murphy   1990]  \cite{M}.   $AF$-algebras are covered in \cite[Section 3.5]{N}.

\subsection{$AF$-algebras}
A {\it $C^*$-algebra} is an algebra $A$ over $\mathbf{C}$ with a norm
$a\mapsto ||a||$ and an involution $a\mapsto a^*$ such that
it is complete with respect to the norm and $||ab||\le ||a||~ ||b||$
and $||a^*a||=||a^2||$ for all $a,b\in A$.
Any commutative $C^*$-algebra is  isomorphic
to the algebra $C_0(X)$ of continuous complex-valued
functions on some locally compact Hausdorff space $X$; 
otherwise, $A$ represents a non-commutative  topological
space.

An {\it $AF$-algebra}  (Approximately Finite $C^*$-algebra) is defined to
be the  norm closure of a dimension-increasing  sequence of   finite dimensional
$C^*$-algebras $M_n$,  where  $M_n$ is the $C^*$-algebra of $n\times n$ matrices
with entries in $\mathbf{C}$. Here the index $n=(n_1,\dots,n_k)$ represents
the  semi-simple matrix algebra $M_n=M_{n_1}\oplus\dots\oplus M_{n_k}$.
The ascending sequence mentioned above  can be written as 
\begin{equation}\label{eq3}
M_1\buildrel\rm\varphi_1\over\longrightarrow M_2
   \buildrel\rm\varphi_2\over\longrightarrow\dots,
\end{equation}
where $M_i$ are the finite dimensional $C^*$-algebras and
$\varphi_i$ the homomorphisms between such algebras.  
The homomorphisms $\varphi_i$ can be arranged into  a graph as follows. 
Let  $M_i=M_{i_1}\oplus\dots\oplus M_{i_k}$ and 
$M_{i'}=M_{i_1'}\oplus\dots\oplus M_{i_k'}$ be 
the semi-simple $C^*$-algebras and $\varphi_i: M_i\to M_{i'}$ the  homomorphism. 
(To keep it simple, one can assume that $i'=i+1$.) 
One has  two sets of vertices $V_{i_1},\dots, V_{i_k}$ and $V_{i_1'},\dots, V_{i_k'}$
joined by  $b_{rs}$ edges  whenever the summand $M_{i_r'}$ contains $b_{rs}$
copies of the summand $M_{i_s}$ under the embedding $\varphi_i$. 
As $i$ varies, one obtains an infinite graph called the  {\it Bratteli diagram} of the
$AF$-algebra.  The matrix $B=(b_{rs})$ is known as  a {\it partial multiplicity} matrix;
an infinite sequence of $B_i$ defines a unique $AF$-algebra.

For a unital $C^*$-algebra $A$, let $V(A)$
be the union (over $n$) of projections in the $n\times n$
matrix $C^*$-algebra with entries in $A$;
projections $p,q\in V(A)$ are {\it equivalent} if there exists a partial
isometry $u$ such that $p=u^*u$ and $q=uu^*$. The equivalence
class of projection $p$ is denoted by $[p]$;
the equivalence classes of orthogonal projections can be made to
a semigroup by putting $[p]+[q]=[p+q]$. The Grothendieck
completion of this semigroup to an abelian group is called
the  $K_0$-group of the algebra $A$.
The functor $A\to K_0(A)$ maps the category of unital
$C^*$-algebras into the category of abelian groups, so that
projections in the algebra $A$ correspond to a positive
cone  $K_0^+\subset K_0(A)$ and the unit element $1\in A$
corresponds to an order unit $u\in K_0(A)$.
The ordered abelian group $(K_0,K_0^+,u)$ with an order
unit  is called a {\it dimension group};  an order-isomorphism
class of the latter is denoted by $(G,G^+)$.  The  dimension group
$(K_0(\mathbb{A}), K_0^+(\mathbb{A}), u)$ of an $AF$-algebra $\mathbb{A}$ 
is a complete  invariant of the isomorphism class  of  algebra $\mathbb{A}$.

\subsection{Mordell $AF$-algebras}
Denote by $S^1$ a unit circle in the complex plane;   let $G$ be a multiplicative  subgroup 
of $S^1$ given by a finite set of  generators $\{\gamma_j\}_{j=1}^s$. 
\begin{lemma}\label{lem2.1}
There exists a bijection  between   groups  $G\subset S^1$
and the dimension groups $(\Lambda_G, \Lambda_G^+,u)$ given by the formula:
\begin{equation}
\left\{
\begin{array}{lll}
\Lambda_G &:=& \mathbf{Z}+\mathbf{Z}\omega_1+\dots+\mathbf{Z}\omega_s\subset\mathbf{R}, 
\quad \omega_j={1\over 2\pi}~Arg~(\gamma_j),\\
\Lambda_G^+ &:=&  \Lambda_G\cap\mathbf{R}^+,\\
u  &:=&  \hbox{\textnormal{an order unit.}}
\end{array}
\right.
\end{equation}
The rank of $(\Lambda_G, \Lambda_G^+,u)$ is equal to $s-t+1$,
where $t$ is the total number of roots of unity among $\gamma_j$.  
\end{lemma}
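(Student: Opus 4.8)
The plan is to transport the whole problem from the multiplicative circle $S^1$ to the additive group $\mathbf{R}/\mathbf{Z}$, where the dimension-group structure becomes transparent. First I would introduce the group isomorphism $\phi\colon S^1\to\mathbf{R}/\mathbf{Z}$ sending $\gamma=e^{2\pi i\omega}$ to the class of $\omega={1\over 2\pi}Arg(\gamma)$ modulo $\mathbf{Z}$; under $\phi$ each generator $\gamma_j$ corresponds to the class of $\omega_j$, so that the subgroup $G=\langle\gamma_1,\dots,\gamma_s\rangle$ corresponds to the subgroup of $\mathbf{R}/\mathbf{Z}$ generated by the classes of $\omega_1,\dots,\omega_s$. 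Pulling this subgroup back along the quotient map $\pi\colon\mathbf{R}\to\mathbf{R}/\mathbf{Z}$ returns precisely $\Lambda_G=\mathbf{Z}+\mathbf{Z}\omega_1+\dots+\mathbf{Z}\omega_s$, since $\ker\pi=\mathbf{Z}$. This identifies the assignment $G\mapsto\Lambda_G$ with the operation of taking full preimages.

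To obtain the bijection I would exhibit the inverse assignment $\Lambda\mapsto\phi^{-1}(\pi(\Lambda))\subset S^1$, defined on subgroups $\Lambda\subset\mathbf{R}$ containing $\mathbf{Z}$. Because $\mathbf{Z}=\ker\pi$, the two operations $G\mapsto\pi^{-1}(\phi(G))$ and $\Lambda\mapsto\phi^{-1}(\pi(\Lambda))$ are mutually inverse and inclusion-preserving; this yields the claimed bijection between subgroups of $S^1$ and the groups $\Lambda_G$, equivalently the subgroups of $\mathbf{R}$ containing $\mathbf{Z}$.

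Next I would verify that $(\Lambda_G,\Lambda_G^+,u)$ is a dimension group with order unit. Since $\Lambda_G$ is a subgroup of $\mathbf{R}$ and $\Lambda_G^+=\Lambda_G\cap\mathbf{R}^+$ is the restriction of the total order of $\mathbf{R}$, the ordered group $(\Lambda_G,\Lambda_G^+)$ is automatically unperforated and enjoys the Riesz interpolation property, both holding trivially in a totally ordered group; hence it is a dimension group by the Effros-Handelman-Shen criterion \cite{E}. The element $u=1$, which lies in $\Lambda_G$ because $\mathbf{Z}\subset\Lambda_G$, is an order unit: for any $g\in\Lambda_G$ one picks an integer $n$ with $n>g$, so that $nu-g>0$.

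Finally I would compute the rank, i.e.\ the torsion-free rank of the abelian group $\Lambda_G$, which equals the dimension over $\mathbf{Q}$ of the linear span of $\{1,\omega_1,\dots,\omega_s\}$. Here $\gamma_j$ is a root of unity if and only if $\omega_j\in\mathbf{Q}$, that is $\omega_j\in\mathbf{Q}\cdot 1$; thus the $t$ generators that are roots of unity contribute nothing to the span beyond the element $1$ already present. The remaining $s-t$ arguments are rationally independent of one another and of $1$, so they enlarge the dimension by exactly $s-t$, giving $\mathrm{rank}\,(\Lambda_G,\Lambda_G^+,u)=(s-t)+1$. I expect the main obstacle to be this last point: the clean count $s-t+1$ presupposes that the arguments of the non-torsion generators, together with $1$, form a $\mathbf{Q}$-linearly independent set, and the argument must make precise in what sense the generating family $\{\gamma_j\}$ is chosen so that this independence holds; without it only the inequality $\mathrm{rank}\le s-t+1$ survives.
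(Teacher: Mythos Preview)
Your proposal is correct and follows essentially the same route as the paper: identify roots of unity with rational $\omega_j$, collapse them into the $\mathbf{Z}$-summand, and count what remains. The paper's proof is terser---it cites density in $\mathbf{R}$ and \cite[Corollary~4.7]{E} in place of your Effros--Handelman--Shen verification, and dispatches the bijection with ``a converse statement is proved similarly''---but the substance is the same.

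Two minor remarks. First, your dimension-group verification via total order is in fact slightly more robust than the paper's density argument, since it covers the degenerate case $t=s$ (all generators torsion) where $\Lambda_G$ need not be dense. Second, the independence issue you flag at the end is real and is handled identically in the paper: it simply asserts that the surviving $\omega_i$ ``are linearly independent irrational numbers'' without further comment. So your caveat that only $\mathrm{rank}\le s-t+1$ follows without an independence hypothesis on the generating set is well placed; the paper tacitly assumes the generators are chosen so that the non-torsion ones are $\mathbf{Q}$-independent modulo $\mathbf{Q}$, and you should feel free to state this as a standing hypothesis rather than a gap in your own argument.
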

\begin{proof}
If  $\gamma_j$ is a root of unity,  then  $\omega_j={p_j\over q_j}$ is a rational 
number.  We have 
\begin{equation}
\begin{array}{lll}
\Lambda_G &\cong& \mathbf{Z}+\mathbf{Z}{p_1\over q_1}+\dots+ \mathbf{Z}{p_t\over q_t}
+\mathbf{Z}\omega_1+\dots+\mathbf{Z}\omega_{s-t}\cong\\
&\cong& \mathbf{Z}+\mathbf{Z}\omega_1+\dots+\mathbf{Z}\omega_{s-t},
\end{array}
\end{equation}
where $\omega_i$ are linearly indpendent irrational numbers. 
Since the set $\Lambda_G$
 is dense in $\mathbf{R}$,  the triple $(\Lambda_G, \Lambda_G^+,u)$ 
 is a dimension group [Effros 1981]   \cite[Corollary 4.7]{E}. 
 Clearly, the rank of such a group is equal to $s-t+1$. 
 A converse statement is proved similarly.  Lemma \ref{lem2.1}  follows.
\end{proof}

Let $\mathcal{E}(\mathbf{R})$   ($\mathcal{E}(\mathbf{C})$, resp.)  be the real (complex, resp.) 
points of $\mathcal{E}(K)$.  Without loss of generality, we assume that $K\subset \mathbf{R}$
 has a real embedding;  if the field $K$ is a totally imaginary CM-field, then we take its totally real
 subfield.   Thus   we have the following inclusions:
\begin{equation}\label{eq2.4}
\mathcal{E}(K)\subset  \mathcal{E}(\mathbf{R})\subset  \mathcal{E}(\mathbf{C}). 
\end{equation}
In view of the Mordell Theorem, we shall write $\mathcal{E}(K)\cong \mathbf{Z}^r\oplus \mathcal{E}_{tors}(K)$,
where $r=rk~\mathcal{E}(K)$ and $\mathcal{E}_{tors}(K)$ are the torsion points of $\mathcal{E}(K)$. 
It is well known that the Weierstrass $\wp$-function maps  $\mathcal{E}(\mathbf{C})$  ($\mathcal{E}(\mathbf{R})$, resp.)
to a complex torus $\mathbf{C}/\Lambda$ (the one-dimensional compact connected Lie group $S^1$, resp.)
In view of inclusion (\ref{eq2.4}) the points of  $\mathcal{E}(K)$  map  to an  abelian subgroup $G$ of $S^1$,  so that 
the $\mathcal{E}_{tors}(K)$  consists of the roots of unity,  see e.g.  [Silverman \& Tate  1992]  \cite[p.40]{ST}. 
Denote by $t$  the total number of generators of $\mathcal{E}_{tors}(K)$;  
then $\mathcal{E}(K)$ has $s=r+t$ generators.   Lemma \ref{lem2.1}  
implies a bijection between elliptic curves  $\mathcal{E}(K)$ modulo their torsion points and 
 dimension groups  $(\Lambda_G, \Lambda_G^+,u)$  of  rank $r+1$;
 the following definition is natural. 
\begin{definition}
By the Mordell $AF$-algebra $\mathbb{A}_{\mathcal{E}(K)}$  of an elliptic curve  $\mathcal{E}(K)$ we understand an $AF$-algebra 
given by the Elliott isomorphism:
\begin{equation}\label{eq2.5}
K_0(\mathbb{A}_{\mathcal{E}(K)})\cong  (\Lambda_G, \Lambda_G^+,u),
\end{equation}
where  $(\Lambda_G, \Lambda_G^+,u)$ is a dimension group of  rank $r+1$. 
\end{definition}

\section{Proof}
Theorem \ref{thm1.2} follows  from an observation that the Mordell 
$AF$-algebra $\mathbb{A}_{\mathcal{E}(K)}$  is a non-commutative  
coordinate ring of  the Euler variety $\mathscr{V}_E$  defined in Section 1.  We shall split the proof 
in a series of lemmas. 
\begin{lemma}\label{lem3.1}
 $\mathscr{V}_E$ is a flat family of abelian varieties $\mathscr{A}_E$ over the 
projective line $\mathbf{C}P^1$.
\end{lemma}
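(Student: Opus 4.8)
The plan is to exhibit the defining ideal of the Euler set explicitly, produce a fibration of its projective closure over $\mathbf{C}P^1$, identify the generic fibre as an abelian variety, and dispatch flatness by a dimension argument. First I would encode the periodicity of $\theta_x=[g_1(x),\dots,g_m(x),\overline{k_1(x),\dots,k_{n-m}(x)}]$ in matrix form. Writing
\begin{equation}
M(x)=\prod_{j=1}^{n-m}\begin{pmatrix} k_j(x) & 1\\ 1 & 0\end{pmatrix}=:\begin{pmatrix} P & P'\\ Q & Q'\end{pmatrix},
\end{equation}
the purely periodic tail is the attracting fixed point of the M\"obius transformation attached to $M(x)$, hence a root of $Q\,\xi^2+(Q'-P)\,\xi-P'=0$; transporting this through the pre-period $g_1(x),\dots,g_m(x)$ yields a quadratic $\alpha(x)\,\theta^2+\beta(x)\,\theta+\gamma(x)=0$ with $\alpha,\beta,\gamma\in\mathbf{Z}[g_i,k_j]$. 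The Euler equations are exactly the relations forcing this quadratic to be proportional to the minimal polynomial $c^2\theta^2-2ac\,\theta+(a^2-b^2x)$ of $\theta_x$ from (\ref{eq1.1}); they generate the ideal of the affine Euler set, of which $\mathscr{V}_E$ is the projective closure of the irreducible component through $x=d$.

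Next I would construct the morphism $\pi\colon\mathscr{V}_E\to\mathbf{C}P^1$ recording the free parameter of the family (\ref{eq1.1}) along which the radicand $x$ varies, extended to the projective closure. Since $\mathscr{V}_E$ is irreducible by construction and $\pi$ is nonconstant, $\pi$ is dominant; consequently the generic fibre has dimension $\dim\mathscr{V}_E-1=c(\mathcal{A}_{RM})-1$, which fixes the dimension of the prospective abelian variety $\mathscr{A}_E$.

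The decisive step — and the principal obstacle — is to endow the generic fibre $\mathscr{A}_E=\pi^{-1}(\mathrm{pt})$ with the structure of an abelian variety. Here I would invoke the classical dictionary, going back to Abel and Chebyshev and rendered algorithmic by Euler and Perron \cite[p.~88]{P}, between periodicity of the continued fraction of a quadratic irrationality and a group-theoretic/torsion condition on an associated curve: the period matrix $M(x)$ acts as the fundamental automorphism, while the cycle of reduced quadratic forms traversed in one period composes under Gauss composition. This furnishes a commutative algebraic group law on the locus cut out by the Euler equations over a fixed base point. The remaining and genuinely delicate verifications are that this fibre is smooth, complete and connected — equivalently, that the projective closure through $x=d$ adds neither singular nor spurious components to the irreducible family — after which $\mathscr{A}_E$ is an abelian variety.

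Finally, flatness is the routine part. Because $\mathscr{V}_E$ is integral and $\mathbf{C}P^1$ is a regular one-dimensional (Dedekind) scheme, every associated point of $\mathscr{V}_E$ lies over the generic point of the base; hence the dominant morphism $\pi$ is flat by the standard criterion for morphisms into a smooth curve. I therefore expect all the difficulty to reside in the group law and the properness of the fibres, and none in the flatness assertion itself.
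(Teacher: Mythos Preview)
Your setup of the Euler equations via the period matrix $M(x)$ is equivalent to what the paper does with convergents $A_i,B_i$: it writes $\theta_D=(A_n\theta_D+A_{n-1})/(B_n\theta_D+B_{n-1})$, solves the quadratic, and matches coefficients against $(a+\sqrt{D})/c$ to obtain the explicit system (\ref{eq3.4}). Where the arguments genuinely diverge is on the two substantive claims. For the $\mathbf{C}P^1$ base, the paper does not project to the radicand and invoke dominance as you do; instead it substitutes (\ref{eq3.5}) into (\ref{eq3.4}) and collapses everything to a single \emph{linear} Diophantine relation (\ref{eq3.6}) in $D-k_n^2A_{n-1}^2$ and $k_n$, so that the infinitude of integer solutions to a solvable linear equation exhibits the line concretely. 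For the abelian structure of the fibre, the paper never touches Gauss composition or the Abel--Chebyshev dictionary: it passes to a \emph{normal form} of the continued fraction (for instance the palindromic shape $[\gamma_1,\overline{\kappa_1,\dots,\kappa_1,2\gamma_1}]$ when $c_1=0$) in which two of the three Euler equations degenerate to identities, and then simply observes that the residual locus in the $(\gamma_i,\kappa_j)$ coordinates is invariant under the integer translations $\gamma_i\mapsto\gamma_i+c_i$, $\kappa_j\mapsto\kappa_j+c_j$, taking this translation symmetry itself to be the group law. Your Dedekind-base criterion for flatness is cleaner than anything the paper offers (it just says ``fiber bundle'' after the linear-equation step); on the other hand, the paper's translation-symmetry argument for the group law, while no less informal than yours, is more elementary and self-contained than the route through cycles of reduced forms and composition that you outline.
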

\begin{proof}
To prove lemma \ref{lem3.1} we shall adopt and generalize an argument of 
[Perron 1954]  \cite[Proposition 3.17]{P}. 
Let us show that there exists a flat morphism 
$p: \mathscr{V}_E\to \mathbf{C}P^1$ of 
 the Euler variety  $\mathscr{V}_E$ into  $\mathbf{C}P^1$ with the fiber $p^{-1}(x)$ 
being an  abelian variety $\mathscr{A}_E$.

Indeed, recall that the algebras $\mathcal{A}_{\theta}$ and $\mathcal{A}_{\theta'}$ are 
said to be {\it Morita equivalent}  if  $\mathcal{A}_{\theta}\otimes\mathcal{K}\cong 
\mathcal{A}_{\theta'}\otimes\mathcal{K}$, where $\mathcal{K}$ is the $C^*$-algebra of 
compact operators.  $\mathcal{A}_{\theta}$ and $\mathcal{A}_{\theta'}$ are 
Morita equivalent  if and only if $\theta'={\alpha\theta+\beta\over \gamma\theta+\delta}$, where $\alpha,\beta,\gamma,
\delta\in\mathbf{Z}$ satisfy the equality $\alpha\delta-\beta\gamma=\pm 1$
 [Rieffel 1990]  \cite{Rie1}. 
 In other words, the continued fractions of $\theta$ and $\theta'$ must coincide everywhere but a finite 
 number of entries  [Perron 1954]  \cite[Section 13]{P}.
On the other hand, ${\mathcal A}_{\theta'}\cong {\mathcal A}_{\theta}$  are isomorphic algebras if and only 
if $\theta'=\theta$ or $\theta'=1-\theta$ [Rieffel 1990]  \cite{Rie1}. 
\begin{remark}\label{rmk3.2}
Functor $F: \mathcal{E}(K)\to \mathcal{A}_{RM}$ maps elliptic
curves which are isomorphic over the field $K$ (over the algebraic closure $\bar K$, resp.)
to  isomorphic (Morita equivalent, resp.) non-commutative tori.
In particular,  twists of  $\mathcal{E}(K)$ correspond to the  isomorphism
classes of the  algebra $\mathcal{A}_{RM}$.
This fact follows from an  isomorphism $A(K)\otimes M_n\cong A(K^{(n)})$,
where $A(K)$ is an algebra over the field $K$, $M_n$ is the matrix algebra of rank $n$ and
$K^{(n)}$ is an extension of degree $n$ of  $K$. 
\end{remark}
Let $\theta_d={a+b\sqrt{d}\over c}=[g_1,\dots, g_m,  ~\overline{k_1,\dots, k_{n-m}}]$,  where 
$(k_1,\dots, k_{n-m})$ is  the minimal period  of the  continued fraction of $\theta_d$. 
To find the Euler equations for $\theta_d$, let $D=b^2d$ and  write $\theta_D$ 
in the form: 
\begin{equation}\label{eq3.1}
\theta_D={A_n\theta_D +A_{n-1}\over B_n\theta_D+B_{n-1}},
\end{equation}
where ${A_i\over B_i}$ is the $i$-th partial fraction of $\theta_D$
and $A_{n}B_{n-1}-A_{n-1}B_{n}=\pm 1$  [Perron 1954]  \cite[Section 19]{P};  
in particular,
\begin{equation}\label{eq3.2}
\left\{
\begin{array}{lll}
A_n &=& k_n A_{n-1}+A_{n-2}\\
B_n &=&  k_n B_{n-1}+B_{n-2}.
\end{array}
\right.
\end{equation}
From equation (\ref{eq3.1})  one finds that 
\begin{equation}\label{eq3.3}
\theta_D={A_n-B_{n-1}+\sqrt{(A_n-B_{n-1})^2+4A_{n-1}B_n}\over 
2B_n}.
\end{equation}
Comparing equation (\ref{eq3.3})  with  conditions (\ref{eq1.1}), we conclude that the Euler 
equations for $\theta_D$ have the form:  
\begin{equation}\label{eq3.4}
\left\{
\begin{array}{ccc}
A_n-B_{n-1} &=& c_1\\
2B_n &=&  c_2\\
D &=& c_1^2+2c_2 A_{n-1},
\end{array}
\right.
\end{equation}
where $c_1, c_2\in \mathbf{Z}$ are constants. 
Using formulas (\ref{eq3.2}) one can rewrite $c_1$ and $c_2$ in the form:
\begin{equation}\label{eq3.5}
\left\{
\begin{array}{lll}
c_1 &=& k_n A_{n-1}+A_{n-2}-B_{n-1}\\
c_2 &=& 2 k_n B_{n-1}+2B_{n-2}.
\end{array}
\right.
\end{equation}
We substitute $c_1$ and $c_2$ given by (\ref{eq3.5}) into the last equation
of system (\ref{eq3.4}) and get an equation:
\begin{equation}\label{eq3.6}
D-k_n^2A_{n-1}^2-2(A_{n-1}A_{n-2}-A_{n-1}B_{n-1}+2B_{n-1})k_n=
(A_{n-2}+B_{n-1})^2 \pm 4.
\end{equation}
But equation (\ref{eq3.6}) is a linear diophantine equation in variables $D-k_n^2A_{n-1}$
and $k_n$;  since (\ref{eq3.6}) has one solution in integer numbers, it has infinitely 
many such solutions. In other words, the projective closure $\mathscr{V}_E$ 
of an affine variety defined 
by the Euler equations (\ref{eq3.4})  is a fiber bundle over the projective line   
$\mathbf{C}P^1$.

Let us show that the fiber $p^{-1}(x)$ of a flat morphism 
$p: \mathscr{V}_E\to \mathbf{C}P^1$ is an abelian variety  $\mathscr{A}_E$. 
Indeed, the fraction $[g_1,\dots, g_m,  ~\overline{k_1,\dots, k_{n-m}}]$
corresponding to the quadratic irrationality
$\theta_d$ can be written in a normal form for which the  Euler equations 
$A_n-B_{n-1}=c_1$ and $2B_n=c_2$ in system (\ref{eq3.4}) become a trivial  
identity.  (For instance, if $c_1=0$,  then the normal fraction  has the form 
$[\gamma_1,  \overline{\kappa_1,\kappa_2,\dots,\kappa_2,\kappa_1, 2\gamma_1}]$, see e.g. 
[Perron 1954]  \cite[Section 24]{P}.)   
Denote by  $[\gamma_1,\dots, \gamma_m,  \overline{\kappa_1,\dots,\kappa_{n-m}}]$ a normal form of 
the fraction  $[g_1,\dots, g_m,  ~\overline{k_1,\dots, k_{n-m}}]$. Then
the Euler equations (\ref{eq3.4}) reduce to a single equation 
$D=c_1^2+c_2 A_{n-1}$, which is equivalent to the linear equation (\ref{eq3.6}). 
Thus the quotient of the Euler variety $\mathscr{V}_E$ by  $\mathbf{C}P^1$
corresponds to the projective closure $\mathscr{A}_E$ of an affine variety defined by $\gamma_i$ and $\kappa_j$.
But $\mathscr{A}_E$ has an obvious translation symmetry, i.e. 
$\{\gamma_i'=\gamma_i+c_i, ~\kappa_j'=\kappa_j+c_j ~|~c_i, c_j\in\mathbf{Z}\}$ define an isomorphic 
projective variety. In other words, the fiber $p^{-1}(x)\cong \mathscr{A}_E$
is  an abelian group,  i.e.  $\mathscr{A}_E$ is an abelian variety.
Lemma \ref{lem3.1} follows. 
\end{proof}

\begin{remark}\label{rmk3.3.1}
The family  of non-commutative tori $\mathcal{A}_{\theta_x}$ given by 
 (\ref{eq1.1})  is an analog of a rational elliptic surface or, equivalently, an elliptic curve over a one-dimensional function 
field, see e.g. [Silverman 1994]  \cite[Chapter III]{S}.  
It transpires, that $\mathscr{A}_E\cong \mathbf{C}^r/\Lambda$,  where 
$r=rk ~\mathcal{E}(K)$ and 
$\Lambda$ is the Mordell-Weil lattice of $\mathcal{E}(K)$ [Sch\"utt  \& Shioda 2019]  \cite[Chapter 1]{SS}. 
From this standpoint, the Mordell-Weil lattice corresponding to the elliptic curve $\mathcal{E}(K)$
is nothing but the universal cover of the Euler variety $\mathscr{V}_E$.
 We will not use this fact anywhere  below and will give an independent proof of
 theorem \ref{thm1.2}.  
 \end{remark}
\begin{remark}\label{rmk3.3.2}
It was proved by [Brock, Elkies \& Jordan 2019] \cite{BrElJo1},  that 
$\mathscr{V}_E$ is a fiber bundle over the Fermat-Pell conic.
The structure group of the bundle was studied in \cite{Nik5}.  
\end{remark}

\begin{corollary}\label{cor3.2}
$\dim_{\mathbf{C}} ~\mathscr{V}_E=1+ \dim_{\mathbf{C}} ~\mathscr{A}_E$
\end{corollary}
\begin{proof}
Since the morphism $p: \mathscr{V}_E\to \mathbf{C}P^1$ is flat,  we conclude 
that 
\begin{equation}\label{eq3.7}
\dim_{\mathbf{C}}\mathscr{V}_E=\dim_{\mathbf{C}} (\mathbf{C}P^1)+\dim_{\mathbf{C}} p^{-1}(x).
\end{equation}
But  $\dim_{\mathbf{C}} (\mathbf{C}P^1)=1$ and $p^{-1}(x)\cong \mathscr{A}_E$. 
Corollary \ref{cor3.2} follows from formula (\ref{eq3.7}). 
\end{proof}

\begin{lemma}\label{lem3.3}
The algebra  $\mathbb{A}_{\mathcal{E}(K)}$ is a non-commutative coordinate ring of 
the abelian variety $\mathscr{A}_E$. 
\end{lemma}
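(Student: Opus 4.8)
The plan is to realize both algebras in the statement as $AF$-algebras specified by their dimension groups, and then to invoke the completeness of the Elliott invariant \cite{Ell1} to identify them. On one side sits the Mordell $AF$-algebra $\mathbb{A}_{\mathcal{E}(K)}$, defined through (\ref{eq2.5}) by the dimension group $(\Lambda_G, \Lambda_G^+, u)$ of rank $r+1$. On the other side sits the non-commutative coordinate ring of the abelian variety $\mathscr{A}_E$, which in the setting of the functor $F$ is the $AF$-algebra attached to the period data of $\mathscr{A}_E$ via its Bratteli diagram. Thus the whole problem reduces to matching the period data of $\mathscr{A}_E$ with the pseudo-lattice $\Lambda_G$, and to reading off $\dim_{\mathbf{C}}\mathscr{A}_E$ from the rank of $\Lambda_G$.

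First I would make $\mathscr{A}_E$ explicit using Lemma \ref{lem3.1}. There $\mathscr{A}_E$ appears as the projective closure of the affine variety cut out by the normal-form entries $\gamma_1,\dots,\gamma_m,\kappa_1,\dots,\kappa_{n-m}$ of the continued fraction, taken modulo the translation symmetry $\gamma_i\mapsto\gamma_i+c_i$, $\kappa_j\mapsto\kappa_j+c_j$. This presents $\mathscr{A}_E$ as a complex torus $\mathbf{C}^g/L_E$, where $g=\dim_{\mathbf{C}}\mathscr{A}_E$ and the period lattice $L_E$ is generated by these entries. Second, I would compute the dimension group of its non-commutative coordinate ring: the continued fraction of $\theta_d$ produces a Bratteli diagram whose partial-multiplicity matrices are assembled from the entries $k_i$ (the rank-one blocks being $\bigl(\begin{smallmatrix} k_i & 1\\ 1 & 0\end{smallmatrix}\bigr)$), so that by the Elliott theorem the resulting dimension group is order-isomorphic to $\mathbf{Z}+\mathbf{Z}\omega_1+\dots+\mathbf{Z}\omega_g\subset\mathbf{R}$, a subgroup of rank $g+1$. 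Third, I would match this with $\Lambda_G$: by Lemma \ref{lem2.1} one has $\Lambda_G=\mathbf{Z}+\mathbf{Z}\omega_1+\dots+\mathbf{Z}\omega_r$ with $\omega_j=\frac{1}{2\pi}Arg(\gamma_j)$ and rank $r+1$, and I would verify that the two sets of generators coincide. The order-isomorphism of the two dimension groups then forces $g+1=r+1$, i.e. $g=r$, and, by Elliott's completeness, identifies the two $AF$-algebras, which is exactly the claim of Lemma \ref{lem3.3}.

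The hard part will be the transition from the second to the third step, namely proving that the dimension group extracted from the abelian variety $\mathscr{A}_E$ is genuinely order-isomorphic to $\Lambda_G$, and in particular that $\dim_{\mathbf{C}}\mathscr{A}_E=r$ rather than $r+1$. I would control the spurious ``$+1$'' by tracking the order unit $u$: it should correspond to the homogenizing coordinate introduced in the projective closure of $\mathscr{A}_E$, so that once the polarization encoded by $u$ is normalized, the $r+1$ free generators of $\Lambda_G$ span an $r$-dimensional period torus. The genuinely delicate point is matching the positive cones and not merely the underlying abelian groups, since Elliott's theorem requires an order-isomorphism; here I would deduce $\Lambda_G^+=\Lambda_G\cap\mathbf{R}^+$ on the abelian-variety side from the density of the period subgroup in $\mathbf{R}$, exactly as in the proof of Lemma \ref{lem2.1}. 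Combined with Corollary \ref{cor3.2}, the resulting equality $\dim_{\mathbf{C}}\mathscr{A}_E=r$ is what ultimately yields $rk~\mathcal{E}(K)=c(\mathcal{A}_{RM})-1$.
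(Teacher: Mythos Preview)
Your plan contains a genuine error in the second step. The Bratteli diagram built from the $2\times 2$ blocks $\bigl(\begin{smallmatrix} k_i & 1\\ 1 & 0\end{smallmatrix}\bigr)$ of a single continued fraction always produces a dimension group of rank~$2$, namely $\mathbf{Z}+\mathbf{Z}\theta_d$; it cannot yield $\mathbf{Z}+\mathbf{Z}\omega_1+\dots+\mathbf{Z}\omega_g$ of rank $g+1$ for $g>1$. You have tacitly identified the number of independent entries in the period $(k_1,\dots,k_{n-m})$ with the rank of the resulting $K_0$-group, and those are different invariants. So the ``non-commutative coordinate ring of $\mathscr{A}_E$'' you write down is not the one whose dimension group has rank $g+1$; to get that object one needs the higher-dimensional torus $\mathcal{A}_{RM}^{2g}$ (this is exactly what the paper imports from \cite{Nik2} later, in Lemma~\ref{lem3.5}).

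More structurally, you are trying to prove the dimension equality $\dim_{\mathbf{C}}\mathscr{A}_E=r$ \emph{inside} Lemma~\ref{lem3.3}, whereas in the paper this is the separate Lemma~\ref{lem3.5}, proved only after Lemma~\ref{lem3.3} is available. The paper's argument for Lemma~\ref{lem3.3} is categorical rather than computational: it sets up four categories $\mathbf{E}_0$, $\mathbf{A}_0$, $\mathbf{M}$, $\mathbf{V}$ (elliptic curves over $K$, tori with real multiplication, Mordell $AF$-algebras, Euler varieties), uses the known equivalence $\mathbf{E}_0\cong\mathbf{A}_0$ from \cite{Nik1}, and then checks $\mathbf{E}_0\cong\mathbf{M}$ via (\ref{eq2.5}) and $\mathbf{A}_0\cong\mathbf{V}$ via the fact that the minimal period $(k_1,\dots,k_n)$ is simultaneously a Morita invariant of $\mathcal{A}_{RM}$ and recoverable from the Euler equations (\ref{eq3.4}). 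Closing the square gives a functor $\mathscr{V}_E\mapsto\mathbb{A}_{\mathcal{E}(K)}$, and since all the data of $\mathscr{V}_E$ lives in the fiber $\mathscr{A}_E$, this is read as ``$\mathbb{A}_{\mathcal{E}(K)}$ is the coordinate ring of $\mathscr{A}_E$''. No explicit matching of positive cones or ranks is attempted at this stage.
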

\begin{proof}
This fact follows from the equivalence of four categories
$\mathbf{E}_0\cong \mathbf{A}_0\cong \mathbf{M}\cong\mathbf{V}$,
where $\mathbf{E}_0\subset\mathbf{E}$ is a subcategory of the category
$\mathbf{E}$ of all non-singular elliptic curves consisting of the curves 
defined over a number field $K$,  $\mathbf{A}_0\subset\mathbf{A}$ 
is a subcategory of the category $\mathbf{A}$ of all non-commutative
tori consisting of the tori with real multiplication,  $\mathbf{M}$ is a 
category of all Mordell $AF$-algebras and  $\mathbf{V}$ is a category 
of all Euler varieties. The morphisms in the  categories $\mathbf{E}$ and $\mathbf{V}$
are isomorphisms between  the projective varieties and the morphisms in the
categories $\mathbf{A}$ and $\mathbf{M}$ are Morita equivalences 
(stable isomorphisms) between  the corresponding $C^*$-algebras
[Murphy   1990]  \cite{M}.  Let us pass to a detailed argument.

The equivalence $\mathbf{E}_0\cong \mathbf{A}_0$ follows from  \cite[Section 5.2]{N}. It remains to show that $\mathbf{E}_0\cong\mathbf{M}$
and $\mathbf{A}_0\cong\mathbf{V}$. 

\medskip
(i) Let us prove that $\mathbf{E}_0\cong\mathbf{M}$. 
Let $\mathcal{E}(K)\in\mathbf{E}_0$ be an elliptic curve and 
$\mathbb{A}_{\mathcal{E}(K)}\in\mathbf{M}$ be the corresponding 
Mordell $AF$-algebra, see Section 2.2.  Then 
$K_0(\mathbb{A}_{\mathcal{E}(K)})\cong  \mathcal{E}(K)$ 
by the Elliott isomorphism (\ref{eq2.5}).   
Note that the order unit $u$ in (\ref{eq2.5}) depends on the choice of 
generators of the abelian group  $\mathcal{E}(K)$ and, therefore, an isomorphism
class of  $\mathcal{E}(K)$ corresponds to the Morita equivalence class of the 
$AF$-algebra $\mathbb{A}_{\mathcal{E}(K)}$.  Thus formula (\ref{eq2.5}) 
defines an equivalence between the categories $\mathbf{E}_0$ and $\mathbf{M}$. 

\smallskip
(ii) Let us prove that $\mathbf{A}_0\cong\mathbf{V}$. 
Let $\mathcal{A}_{RM}\in\mathbf{A}_0$ be a non-commutative torus
with real multiplication and let $(k_1,\dots,k_n)$ be the minimal period 
of the continued fraction corresponding to the quadratic irrational number 
$\theta_d$. The period is  a Morita invariant of the algebra $\mathcal{A}_{RM}$ and a
cyclic permutation of $k_i$ gives an  algebra $ \mathcal{A}_{RM}'$
which is Morita equivalent  to $\mathcal{A}_{RM}$. On the other hand,
it is not hard to see that the period $(k_1,\dots,k_n)$ can be uniquely recovered from the 
coefficients of the Euler equations (\ref{eq3.4}) and a cyclic permutation of $k_i$ 
in (\ref{eq3.4}) defines an Euler variety $\mathscr{V}_E'$ which is isomorphic to 
 $\mathscr{V}_E$. In other words, the categories $\mathbf{A}_0$ and $\mathbf{V}$
are isomorphic. 

\medskip
Comparing (i) and (ii) with the equivalence $\mathbf{E}_0\cong\mathbf{A}_0$,
one concludes that $\mathbf{V}\cong\mathbf{M}$ are equivalent categories,
where a functor $F: \mathbf{V}\to\mathbf{M}$ acts by the formula  
$\mathscr{V}_E\mapsto \mathbb{A}_{\mathcal{E}(K)}$ 
given by the closure of arrows of the commutative diagram in Figure 1. 
It remains to recall that  $\mathscr{V}_E$ is a fiber bundle over the 
$\mathbf{C}P^1$; therefore all geometric data of  $\mathscr{V}_E$ is 
recorded by the fiber $\mathscr{A}_E$ alone. We conclude that the 
algebra $\mathbb{A}_{\mathcal{E}(K)}$ is a twisted homogeneous coordinate ring of the 
abelian variety  $\mathscr{A}_E$, i.e. an isomorphism class of 
 $\mathscr{A}_E$  corresponds to the Morita equivalence class 
 of the algebra  $\mathbb{A}_{\mathcal{E}(K)}$. Lemma \ref{lem3.3}
 follows. 
\end{proof}

\begin{figure}
\begin{picture}(300,110)(-80,-5)
\put(20,70){\vector(0,-1){35}}
\put(105,70){\vector(0,-1){35}}
\put(40,30){\vector(1,1){50}}
\put(40,78){\vector(1,-1){50}}
\put(10,20){$\mathcal{A}_{RM}$}
\put(100,18){$\mathbb{A}_{\mathcal{E}(K)}$}
\put(10,80){$\mathcal{E}(K)$}
\put(100,80){$\mathscr{V}_E\cong (\mathscr{A}_E, \mathbf{C}P^1, p)$}
\put(115,50){$F$}
\end{picture}
\caption{Functor  $F: \mathbf{V}\to\mathbf{M}$.}
\end{figure}

\begin{remark}\label{rmk3.4}
The diagram in Figure 1 implies an equivalence of the categories: 
(i) $\mathbf{E}_0\cong\mathbf{V}$ and (ii) $\mathbf{A}_0\cong\mathbf{M}$. 
The bijection (i) is realized by the formula $\mathcal{E}(K)\mapsto
\mathscr{A}_E\cong Jac~(X(N))$,  where $X(N)$ is an Eichler-Shimura-Taniyama
modular curve of level $N$ over $\mathcal{E}(K)$  and $Jac~(X(N))$ is the 
Jacobian of $X(N)$.  The bijection (ii) is given by the formula 
$\mathcal{A}_{RM}\mapsto \mathbb{A}_{\mathcal{E}(K)}$,  so that 
the number field $K_0(\mathbb{A}_{\mathcal{E}(K)})\otimes\mathbf{Q}$ 
is the maximal abelian extension of a real quadratic number field 
$K_0(\mathcal{A}_{RM})\otimes\mathbf{Q}$. 
\end{remark}
\begin{lemma}\label{lem3.5}
$rk~\mathcal{E}(K)=\dim_{\mathbf{C}} ~\mathscr{A}_E$.
\end{lemma}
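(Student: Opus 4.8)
The plan is to extract $\dim_{\mathbf{C}}\mathscr{A}_E$ directly from the rank of the $K_0$-group of the Mordell $AF$-algebra, using the coordinate-ring identification of Lemma \ref{lem3.3}. First I would record the two integers that must be matched. On the arithmetic side, the Elliott isomorphism (\ref{eq2.5}) gives $K_0(\mathbb{A}_{\mathcal{E}(K)})\cong(\Lambda_G,\Lambda_G^+,u)$, a dimension group whose rank was already computed in Section 2.2 by means of Lemma \ref{lem2.1}: since $\mathcal{E}(K)$ has $s=r+t$ generators of which exactly $t$ are torsion (roots of unity), the rank equals $s-t+1=r+1$, where $r=rk~\mathcal{E}(K)$. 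Thus $\mathrm{rank}~K_0(\mathbb{A}_{\mathcal{E}(K)})=r+1$, and this integer is a Morita (stable isomorphism) invariant of the algebra.

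On the geometric side, Lemma \ref{lem3.3} identifies $\mathbb{A}_{\mathcal{E}(K)}$ with the non-commutative coordinate ring of the abelian variety $\mathscr{A}_E$, under an equivalence carrying Morita equivalence of $AF$-algebras to isomorphism of varieties. I would therefore argue that the Morita-invariant $\mathrm{rank}~K_0$ computes a dimensional invariant of $\mathscr{A}_E$. Concretely, the $AF$-algebra plays the role of the graded coordinate ring of the projective variety $\mathscr{A}_E$, so its $K_0$-rank should equal the Krull dimension of the affine cone over $\mathscr{A}_E$, namely $\dim_{\mathbf{C}}\mathscr{A}_E+1$. On the algebra side the extra $+1$ is exactly the contribution of the order unit $u$ in $(\Lambda_G,\Lambda_G^+,u)$, which encodes the ample class giving the projective embedding; geometrically it is the passage from the cone to its projectivization. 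Equating the two counts, $\dim_{\mathbf{C}}\mathscr{A}_E+1=\mathrm{rank}~K_0(\mathbb{A}_{\mathcal{E}(K)})=r+1$, whence $\dim_{\mathbf{C}}\mathscr{A}_E=r$, which is the claimed equality.

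The step I expect to be the main obstacle is the precise justification that $\mathrm{rank}~K_0$ equals $\dim_{\mathbf{C}}\mathscr{A}_E+1$ — that is, that the Morita-invariant rank of the dimension group really records the dimension of the cone over $\mathscr{A}_E$ rather than some finer $K$-theoretic datum. Here I would lean on the categorical equivalence $\mathbf{M}\cong\mathbf{V}$ established in Lemma \ref{lem3.3}: under it the isomorphism class of $\mathscr{A}_E$ is determined by the Morita class of $\mathbb{A}_{\mathcal{E}(K)}$, so $\dim_{\mathbf{C}}\mathscr{A}_E$ is a function of the triple $(\Lambda_G,\Lambda_G^+,u)$ alone, and what remains is to see that this function is $\mathrm{rank}-1$. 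As a sanity check one may verify the low-rank cases $r=0$ (the fiber is a point) and $r=1$ (the fiber is an abelian variety of dimension one), and compare with the description $\mathscr{A}_E\cong Jac~(X(N))$ of Remark \ref{rmk3.4}. Combined with Corollary \ref{cor3.2}, the identity $\dim_{\mathbf{C}}\mathscr{A}_E=r$ then yields $\dim_{\mathbf{C}}\mathscr{V}_E=r+1=c(\mathcal{A}_{RM})$, which is Theorem \ref{thm1.2}.
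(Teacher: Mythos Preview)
Your overall strategy matches the paper's: compute $\mathrm{rank}~K_0(\mathbb{A}_{\mathcal{E}(K)})$ two ways --- once as $r+1$ via the Elliott isomorphism of Section 2.2, once as $\dim_{\mathbf{C}}\mathscr{A}_E+1$ via the coordinate-ring identification of Lemma \ref{lem3.3} --- and equate. The difference lies in how the second count is justified. You argue heuristically that the $K_0$-rank should equal the Krull dimension of the affine cone over $\mathscr{A}_E$, with the order unit accounting for the extra $+1$, and you correctly flag this step as the main obstacle. The paper instead invokes a concrete prior result \cite[Sections 1.1 and 3.2]{Nik2}: the non-commutative coordinate ring of an $n$-dimensional abelian variety is the $2n$-dimensional non-commutative torus $\mathcal{A}_{RM}^{2n}$, whose $K_0$-group is $\mathbf{Z}+\mathbf{Z}\theta_1+\dots+\mathbf{Z}\theta_n$ of rank $n+1$. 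This supplies exactly the identification $\mathrm{rank}~K_0=\dim_{\mathbf{C}}\mathscr{A}_E+1$ that your cone argument was reaching for, replacing the heuristic with a citation. So your plan is the right one; what is missing is not an idea but the specific input from \cite{Nik2} that pins down the $K_0$-rank of the coordinate ring in terms of $\dim_{\mathbf{C}}\mathscr{A}_E$.
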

\begin{proof}
Let $\mathscr{A}_E$ be an abelian variety  over the field $\mathbf{Q}$ 
and let $n=\dim_{\mathbf{C}} ~\mathscr{A}_E$.
In this case it is known that the coordinate ring of $\mathscr{A}_E$
is isomorphic to the algebra $\mathcal{A}_{RM}^{2n}$, where 
$\mathcal{A}_{RM}^{2n}$ is a $2n$-dimensional non-commutative 
torus with real multiplication \cite[Remark 6.6.2]{N}.
(Notice that the proof can be easily extended from the case of
  complex multiplication to the case of $\mathscr{A}_E$
  over $\mathbf{Q}$.)  Moreover, 
\begin{equation}\label{eq3.8}
K_0(\mathcal{A}_{RM}^{2n})\cong \mathbf{Z}+\mathbf{Z}\theta_1+\dots+\mathbf{Z}\theta_n, 
\end{equation}
 see  \cite[Remark 6.6.2]{N}. 
But we know from lemma \ref{lem3.3}  that a coordinate ring of the abelian variety 
$\mathscr{A}_E$ is isomorphic to the algebra $\mathbb{A}_{\mathcal{E}(K)}$, 
so that 
\begin{equation}\label{eq3.9}
rk ~K_0(\mathbb{A}_{\mathcal{E}(K)})=1+rk ~\mathcal{E}(K), 
\end{equation}
see Section 2.2.  Comparing formulas (\ref{eq3.8}) and (\ref{eq3.9}),
we conclude that $rk~\mathcal{E}(K)=n=\dim_{\mathbf{C}} ~\mathscr{A}_E$.
Lemma \ref{lem3.5} follows. 
\end{proof}

\bigskip
Theorem \ref{thm1.2} follows from corollary \ref{cor3.2}, lemma \ref{lem3.5}
and a definition of the arithmetic complexity $c(\mathcal{A}_{RM})=\dim_{\mathbf{C}} ~\mathscr{V}_E$.

\section{Examples}
We shall  illustrate   theorem \ref{thm1.2} by  an example 
of  $\mathbf{Q}$-curves introduced in [Gross  1980] \cite{G}. 
Denote by   $\mathcal{E}_{CM}^{(-d,f)}$
an  elliptic curve with  complex multiplication by an order $R_f$ of conductor $f$
in the imaginary quadratic field  $\mathbf{Q}(\sqrt{-d})$,  see e.g. [Silverman 1994] 
\cite[pp. 95-96]{S}.   It is known that  the non-commutative torus corresponding 
to $\mathcal{E}_{CM}^{(-d,f)}$
will have  real multiplication by  the order $\mathfrak{R}_{\mathfrak{f}}$ of 
conductor $\mathfrak{f}$  in the quadratic field $\mathbf{Q}(\sqrt{d})$;  such a torus
we denote by  $\mathcal{A}_{RM}^{(d, \mathfrak{f})}$.    The conductor $\mathfrak{f}$ 
is defined by the equation $|Cl~(\mathfrak{R}_{\mathfrak{f}})|=|Cl~(R_f)|$,  where $Cl$ is the
class  group of the respective orders  \cite[Section 6.1.1]{N}.   
Let $(\mathcal{E}_{CM}^{(-d,f)})^{\sigma},  ~\sigma\in Gal~(k| \mathbf{Q})$ be 
the  Galois conjugate of the curve  $\mathcal{E}_{CM}^{(-d,f)}$; 
by a {\it $\mathbf{Q}$-curve} one understands   $\mathcal{E}_{CM}^{(-d,f)}$,
such that  there exists  an  isogeny between $(\mathcal{E}_{CM}^{(-d,f)})^{\sigma}$ and   $\mathcal{E}_{CM}^{(-d,f)}$
for  each    $\sigma\in Gal~(k|\mathbf{Q})$. 
Let $\mathfrak{P}_{3 \mod 4}$ be the set of all primes  
$p=3 \mod 4$;    it is known that  $\mathcal{E}_{CM}^{(-p,1)}$ is a $\mathbf{Q}$-curve 
whenever $p\in  \mathfrak{P}_{3 ~mod ~4}$  [Gross 1980]   \cite[p. 33]{G}. 
The rank of  $\mathcal{E}_{CM}^{(-p,1)}$  is always divisible by $2h_K$,  where $h_K$ is 
the  class number of the field $K:=\mathbf{Q}(\sqrt{-p})$  [Gross 1980]   \cite[p. 49]{G};  
  by a {\it $\mathbf{Q}$-rank}  of  $\mathcal{E}_{CM}^{(-p,1)}$
 we understand the integer 
 $rk_{\mathbf{Q}}(\mathcal{E}_{CM}^{(-p,1)}):={1\over 2h_K}~rk~(\mathcal{E}_{CM}^{(-p,1)})$.   
Table 1  below shows the $\mathbf{Q}$-ranks of the $\mathbf{Q}$-curves
and the arithmetic complexity of the corresponding non-commutative tori 
for the primes $1<p<100$. The respective calculations of the rank can be found in 
[Gross 1980]   \cite{G} and of  the arithmetic complexity
in \cite[Section 6.2.4.1]{N}.   It is easy to see,  that in all the examples of Table 1 the rank of elliptic curves and
the corresponding arithmetic complexity satisfy the equality: 
\begin{equation}\label{eq4.0}
 rk_{\mathbf{Q}}\left(\mathcal{E}_{CM}^{(-p,1)}\right)=c\left(\mathcal{A}_{RM}^{(p,1)}\right)-1,
\end{equation}
where $\mathcal{A}_{RM}^{(p,1)}=F(\mathcal{E}_{CM}^{(-p,1)})$. 

\begin{table}[h]
\begin{tabular}{c|c|c|c}
\hline
&&&\\
$p\equiv 3 \mod 4$ & $rk_{\mathbf{Q}}(\mathcal{E}_{CM}^{(-p,1)})$ & $\sqrt{p}$ & 
$c(\mathcal{A}_{RM}^{(p,1)})$\\
&&&\\
\hline
$3$ & $1$ & $[1,\overline{1,2}]$ & $2$\\
\hline
$7$ & $0$ & $[2,\overline{1,1,1,4}]$ & $1$\\
\hline
$11$ & $1$ & $[3,\overline{3,6}]$ & $2$\\
\hline
$19$ & $1$ & $[4,\overline{2,1,3,1,2,8}]$ & $2$\\
\hline
$23$ & $0$ & $[4,\overline{1,3,1,8}]$ & $1$\\
\hline
$31$ & $0$ & $[5,\overline{1,1,3,5,3,1,1,10}]$ & $1$\\
\hline
$43$ & $1$ & $[6,\overline{1,1,3,1,5,1,3,1,1,12}]$ & $2$\\
\hline
$47$ & $0$ & $[6,\overline{1,5,1,12}]$ & $1$\\
\hline
$59$ & $1$ & $[7,\overline{1,2,7,2,1,14}]$ & $2$\\
\hline
$67$ & $1$ & $[8,\overline{5,2,1,1,7,1,1,2,5,16}]$ & $2$\\
\hline
$71$ & $0$ & $[8,\overline{2,2,1,7,1,2,2,16}]$ & $1$\\
\hline
$79$ & $0$ & $[8,\overline{1,7,1,16}]$ & $1$\\
\hline
$83$ & $1$ & $[9,\overline{9,18}]$ & $2$\\
\hline
\end{tabular}

\bigskip
\centerline{Table 1.  The $\mathbf{Q}$-curves $\mathcal{E}_{CM}^{(-p,1)}$  with  $1<p<100$.}
\end{table}

\section*{Conflict of interest}
On behalf of all authors, the corresponding author states that there is no conflict of interest.
  
\section*{Data availability}
  
  Data sharing not applicable to this article as no datasets were generated or analyzed during the current study.

\bibliographystyle{amsplain}


\end{document}